\newcommand{\field}[1]{\mathbb{#1}}
\newcommand{\N}{\field{N}}
\numberwithin{equation}{section}
\newtheorem{theorem}{Theorem}[section]
\newtheorem{lemma}[theorem]{Lemma}
\newtheorem{corollary}[theorem]{Corollary}
\newtheorem{proposition}[theorem]{Proposition}
\theoremstyle{remark}
\renewenvironment{proof}[1][Proof]{\begin{trivlist}
\item[\hskip \labelsep {\bfseries #1:}]}{\qed\end{trivlist}}
\title{On a Rogers-Ramanujan type identity from crystal base theory}
\author{Jehanne Dousse}
\address{Institut f\"ur Mathematik, Universit\"at Z\"urich\\ Winterthurerstrasse 190, 8057 Z\"urich, Switzerland}
\email{jehanne.dousse@math.uzh.ch}
\author{Jeremy Lovejoy}
\address{\rm CNRS, Universit\'e Denis Diderot - Paris 7, Case 7014, 75205 Paris Cedex 13, France}
\email{lovejoy@math.cnrs.fr}
\thanks{The first author is supported by the Forschungskredit of the University of Zurich, grant no. FK-16-098. The authors thank the University of Zurich and the French-Swiss collaboration project no. 2015-09 for funding research visits during which this research was conducted.}
\begin{document}

\begin{abstract}
We refine and generalise a Rogers-Ramanujan type partition identity arising from crystal base theory.   Our proof uses the variant of the method of weighted words recently introduced by the first author.
\end{abstract}

\maketitle

\section{Introduction}
As combinatorial statements the \emph{Rogers-Ramanujan identities} assert that for $i = 0$ or $1$ and for all non-negative integers $n$, the number of partitions of $n$ into parts differing by at least two and having at most $i$ ones is equal to the number of partitions of $n$ into parts congruent to $\pm (2-i)$ modulo $5$.   As $q$-series identities they say that 
\begin{equation} \label{R-R}
\sum_{n \geq 0} \frac{q^{n^2+ (1-i)n}}{(q;q)_n} = \frac{1}{(q^{2-i};q^5)_{\infty}(q^{3+i};q^5)_{\infty}},
\end{equation} 
where for $n \in \mathbb{N} \cup \{\infty\}$ we have
$$
(a;q)_n := \prod_{k=0}^{n-1} (1-aq^k).
$$
A Lie-theoretic interpretation and proof of these identities were given by Lepowsky and Wilson \cite{Lepowsky,Lepowsky2}.   Up to a factor of $(-q;q)_{\infty}$, the right-hand side of \eqref{R-R} is the principally specialised Weyl-Kac character formula for level $3$ standard $A_1^{(1)}$-modules \cite{Le-Mi1,Le-Mi2}, while the product of this factor and the left-hand side corresponds to bases constructed from vertex operators.

The vertex operator approach of Lepowsky and Wilson was subsequently extended by many authors to treat level $k$ and/or other affine Lie algebras, beginning a fruitful interaction between Lie theory and partition theory.    For some examples of vertex operator constructions leading to partition identities, see \cite{Capparelli,Capparelli2,Meurman,Meurman2,Meurman3,Siladic}, and for some combinatorial approaches to such partition identities we refer to \cite{AllAndGor,Andrewscap,Doussesil,Doussesil2}.      

In \cite{Primc2}, Primc observed that the difference conditions in certain vertex operator constructions correspond to energy functions of perfect crystals, and in \cite{Primc} he studied partition identities of the Rogers-Ramanujan type coming from crystal base theory.  (For other early examples of the study of Rogers-Ramanujan type identities from the point of view of crystal bases, see \cite{Jing-Misra,OSS1,S-W1}.)  Here the Weyl-Kac character formula again gives the partitions defined by congruence conditions, while the crystal base character formula of Kang, Kashiwara, Misra, Miwa, Nakashima and Nakayashiki \cite{KMN2} ensures the correspondence with partitions defined by difference conditions.

In this paper we will be concerned with the following partition identity of Primc.   Consider partitions $(\lambda_1, \lambda_2,\dots)$ into parts of four colours $a,b,c,d$, with the order
\begin{equation}
\label{orderprimc}
1_{a} < 1_{b} < 1_{c} <1_{d} <2_{a} < 2_{b} <2_{c} < 2_{d} < \cdots ,
\end{equation}
where $k_z$ denotes the part $k$ of colour $z$ for $k \in \N$ and $z \in \{a,b,c,d\}$.
Let the minimal difference between consecutive parts of colour $x$ and $y$ be given by the entry $(x,y)$ in the matrix
\begin{equation} \label{Dmatrix}
D=\bordermatrix{\text{} & a & b & c & d \cr a & 2&1&2&2 \cr b &1&0&1&1 \cr c &0&1&0&2 \cr d&0&1&0&2}.
\end{equation}
Then 
$$
\sum_{\lambda} q^{\sum_{k \geq 1}\big( (2k-1)A_k(\lambda) + 2k (B_k(\lambda) + C_k(\lambda)) + (2k+1)D_k(\lambda)\big)} = \frac{1}{(q)_{\infty}}, 
$$
where the sum is over the coloured partitions $\lambda$ satisfying the difference conditions given by \eqref{Dmatrix} and where $A_k(\lambda)$ (resp. $B_k(\lambda), C_k(\lambda), D_k(\lambda)$) denotes the number of parts $k$ of colour $a$ (resp. $b, c ,d$) in $\lambda$. 
In other words, if the coloured integers in \eqref{orderprimc} are transformed by
\begin{equation} \label{Primcdilation}
\begin{aligned}
k_{a} &\rightarrow 2k-1,
\\k_{b} &\rightarrow 2k,
\\k_{c} &\rightarrow 2k,
\\k_{d} &\rightarrow 2k+1,
\end{aligned}
\end{equation}
the generating function for the resulting coloured partitions with the difference conditions inherited from \eqref{Dmatrix} is equal to the generating function for ordinary partitions\footnote{This was actually stated with a question mark by Primc, who was unsure of the application of the crystal base formula of \cite{KMN2} to the case of the $A_1^{(1)}$-crystal whose energy matrix is \eqref{Dmatrix}.   We are indebted to K. Misra for pointing out that this case is covered by Section 1.2 of \cite{KMN2}, rendering Primc's question mark unnecessary.}. 

Our main result is a generalisation and refinement of Primc's identity.   

\begin{theorem}
\label{th:main}
Let $A(n;k,\ell,m)$ denote the number of four-coloured partitions of $n$ with the ordering \eqref{orderprimc} and matrix of difference conditions \eqref{Dmatrix}, having $k$ parts coloured $a$, $\ell$ parts coloured $c$ and $m$ parts coloured $d$. Then
$$\sum_{n,k,\ell,m \geq 0} A(n;k,\ell,m) q^n a^k c^{\ell} d^m = \frac{(-aq;q^2)_{\infty}(-dq;q^2)_{\infty}}{(q;q)_{\infty}(cq;q^2)_{\infty}}.$$
\end{theorem}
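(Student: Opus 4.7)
The plan is to apply the variant of the method of weighted words introduced by the first author, which has already yielded refinements of several Rogers-Ramanujan type identities. For each colour $z \in \{a,b,c,d\}$ I would introduce an auxiliary generating function $F_z = F_z(x;a,c,d,q)$ for the Primc partitions of Theorem \ref{th:main} whose largest part has colour $z$, with the formal variable $x$ tracking the value of that largest part; the total generating function is $F(x) = 1 + \sum_z F_z(x)$, and setting $x = 1$ will recover the identity.

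Next, the matrix $D$ in \eqref{Dmatrix} produces a system of four $q$-difference equations on the $F_z$: removing the largest part $N_z$ leaves a Primc partition whose own largest part $M_w$ must satisfy $N - M \geq D_{z,w}$, and summing over $N$, $M$, and $w$ (together with the colour-dependent dilations in \eqref{Primcdilation}) expresses each $F_z(x)$ as a linear combination of the $F_w(xq^j)$ for small $j$, times monomials in $a,c,d,q$. This system has a unique formal power series solution with the correct constant term, so the remaining work is an ansatz verification. The natural candidate is the $x$-lifted right-hand side, namely
\[
F(x) \;=\; \frac{(-axq;q^2)_\infty (-dxq;q^2)_\infty}{(xq;q)_\infty (cxq;q^2)_\infty},
\]
together with a compatible splitting $F = 1 + \sum_z F_z$. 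Since $F(x)/F(xq^j)$ is a rational monomial in $x,q,a,c,d$, these verifications reduce to routine manipulations of finite $q$-Pochhammer factors, and specialising $x=1$ then yields Theorem \ref{th:main}.

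I expect the main obstacle to lie in the set-up. The matrix $D$ is asymmetric and contains several vanishing entries (notably $D_{c,a}=D_{c,c}=D_{d,a}=D_{d,c}=0$), so adjacent parts in various colour pairs may coincide in value while others differ by only one, and finding the correct parametrisation of the $F_z$ and the correct split of the product ansatz will take some care. A further subtlety is that the colour $b$ appears in the internal $q$-difference system but is absent from the right-hand side of Theorem \ref{th:main}; at some point the $b$-dependence must be shown to drop out, which typically forces one to combine two of the equations (for instance the $b$- and $c$-equations) before matching against the ansatz. Once the correct split is identified, the remainder of the argument should be essentially mechanical.
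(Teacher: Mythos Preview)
Your overall framework---deriving a system of functional equations from the difference matrix $D$ and then passing to a limit---is in the same spirit as the paper.  However, the heart of your plan is an ansatz verification, and that is where the proposal breaks down.

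The product you write down,
\[
F(x)\;=\;\frac{(-axq;q^2)_\infty(-dxq;q^2)_\infty}{(xq;q)_\infty(cxq;q^2)_\infty},
\]
is \emph{not} the generating function for Primc partitions with $x$ marking the value of the largest part (nor with $x$ marking the number of parts).  A one–line check: with $c=d=0$ and $a=1$, the coefficient of $x^{1}q^{3}$ in your product equals $2$, whereas the only Primc partition with largest part of value $1$ and total size $3$ (using colours $a,b$ only) is $(1_b,1_b,1_b)$, giving coefficient $1$.  In fact the paper shows that the relevant $x$–series is not a product at all: after reducing to a single sequence $G_{k_d}$ and forming $f(x)=\sum_{k\ge0}\frac{G_{(k-1)_d}}{1-q^{k}}\,x^{k}$, one finds
\[
f(x)\;=\;(-x;q)_\infty\sum_{n\ge0}\frac{(-aq;q^2)_n(-dq;q^2)_n}{(q^2;q^2)_n(cq;q^2)_n}\,x^{2n},
\]
a $q$–hypergeometric sum times a product.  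Consequently there is no ``compatible splitting'' $F=1+\sum_z F_z$ into rational multiples of a single product, and the promised routine verification cannot be carried out.  (A smaller point: the dilations \eqref{Primcdilation} play no role in proving Theorem~\ref{th:main}; they are applied afterwards to recover Primc's original statement.)

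What the paper actually does is \emph{solve} rather than verify.  From the four first–order recurrences encoded by $D$ one eliminates down to a single third–order recurrence in $k$ for $G_{k_d}$.  Passing to the generating series $f(x)$ above gives a second–order $q$–difference equation, and the key step is the substitution $g(x)=f(x)/(-x;q)_\infty$, after which the recurrence on the coefficients $a_n$ of $g$ factors as
\[
a_n=\frac{(1+aq^{n-1})(1+dq^{n-1})}{(1-q^{n})(1-cq^{n-1})}\,a_{n-2},\qquad a_0=1,\ a_1=0.
\]
This is solved in closed form, and Appell's comparison theorem then extracts $\lim_{k\to\infty}G_{k_d}$ as the desired infinite product.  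The substitution by $(-x;q)_\infty$ and the resulting even/odd decoupling are the genuine ideas here; your proposal does not anticipate them, and without them the system does not reduce to anything one can simply check against a product.
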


Under the dilations
\begin{equation} \label{Primcrefineddilation}
\begin{aligned}
q &\rightarrow q^2,
\\a &\rightarrow aq^{-1},
\\d &\rightarrow dq,
\end{aligned}
\end{equation}
the integers in \eqref{orderprimc} are transformed by
\begin{equation}
\begin{aligned}
k_a &\rightarrow (2k-1)_a,
\\k_b &\rightarrow 2k_b,
\\k_c &\rightarrow 2k_c,
\\k_d &\rightarrow (2k+1)_d,
\end{aligned}
\end{equation}
their order becomes
$$1_a < 2_b <2_c < 3_d <3_a <4_b <4_c<5_d < \cdots $$
and the matrix $D$ in \eqref{Dmatrix} becomes
$$
D_2=\bordermatrix{\text{} & a & b & c & d \cr a & 4&1&3&2 \cr b &3&0&2&1 \cr c &1&2&0&3 \cr d&2&3&1&4}.
$$
Considering the $a$-parts and $c$-parts together coloured red and the $b$-parts and the $d$-parts together coloured green, this gives the following refinement of Primc's identity in terms of two-coloured partitions.

\begin{corollary}
\label{th:primcrefined}
Let $\mathcal{P}_2$ denote the set of partitions where parts may appear in two colours, say red and green, and let $c(\lambda_i)$ denote the colour of a part $\lambda_i$.
Let $A_2(n;k,\ell,m)$ denote the number of partitions $(\lambda_1, \lambda_2,\dots)$ of $n$ in $\mathcal{P}_2$ having $k$ odd red parts, $\ell$ even red parts, and $m$ odd green parts, such that no part is a green $1$ and
$$
\lambda_i - \lambda_{i+1} \geq 
\begin{cases}
1,& \text{if $\lambda_i$ is odd and $c(\lambda_i) \neq c(\lambda_{i+1})$}, \\
2,& \text{if $\lambda_i$ is even and $c(\lambda_i) \neq c(\lambda_{i+1})$}, \\
3,& \text{if $\lambda_i$ is odd and $c(\lambda_i) = c(\lambda_{i+1})$}.
\end{cases}
$$
Then 
\begin{equation} \label{fiere}
\sum_{n,k,\ell,m \geq 0} A_2(n;k,\ell,m) q^n a^k c^{\ell} d^m = \frac{(-aq;q^4)_{\infty}(-dq^3;q^4)_{\infty}}{(q^2;q^2)_{\infty}(cq^2;q^4)_{\infty}}.
\end{equation}
In other words, if $B_2(n;k,\ell,m)$ denotes the number of partitions of $n$ in $\mathcal{P}_2$ such that odd parts are distinct and only parts congruent to $2$ modulo $4$ may be green, having $k$ parts congruent to $1$ modulo $4$, $\ell$ green parts, and $m$ parts congruent to $3$ modulo $4$, then 
$$
A_2(n;k,\ell,m)  =  B_2(n;k,\ell,m).
$$ 
\end{corollary}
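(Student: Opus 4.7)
The plan is to derive Corollary~\ref{th:primcrefined} from Theorem~\ref{th:main} by applying the dilations \eqref{Primcrefineddilation} and then translating the combinatorial data. On the product side, substituting $q \to q^2$, $a \to aq^{-1}$ and $d \to dq$ sends $(-aq;q^2)_\infty$, $(-dq;q^2)_\infty$, $(q;q)_\infty$ and $(cq;q^2)_\infty$ to $(-aq;q^4)_\infty$, $(-dq^3;q^4)_\infty$, $(q^2;q^2)_\infty$ and $(cq^2;q^4)_\infty$ respectively, so the right-hand side of Theorem~\ref{th:main} becomes the right-hand side of \eqref{fiere}. On the partition side, the weight $aq^k$ of a $k_a$-part becomes $aq^{2k-1}$, realising $k_a$ as a part of value $2k-1$; similarly $k_b$, $k_c$, $k_d$ become $2k_b$, $2k_c$, $(2k+1)_d$. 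Grouping the colours as red $= \{a,c\}$ and green $= \{b,d\}$, the $a$-parts are exactly the odd red parts, the $c$-parts the even red parts, the $d$-parts the odd green parts and the $b$-parts the even green parts, so the monomial $a^k c^\ell d^m$ records precisely the statistics in the corollary.

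The main step will be to verify, entry by entry, that the difference conditions imposed by the dilated matrix $D_2$ are equivalent to the three cases listed in the corollary. The key observation is that since $a$- and $d$-parts are now odd and $b$- and $c$-parts are now even, the parity of $\lambda_i - \lambda_{i+1}$ is forced by the colours. This sharpens the corollary's bounds whenever the forced parity is even: the bound ``$\geq 1$'' for odd $\lambda_i$ of different colour becomes ``$\geq 2$'' on the pair $(a,d)$, matching $D_2(a,d)=2$; the bound ``$\geq 2$'' for even $\lambda_i$ of different colour becomes ``$\geq 3$'' on the pair $(b,a)$, matching $D_2(b,a)=3$; and the bound ``$\geq 3$'' for odd $\lambda_i$ of the same colour becomes ``$\geq 4$'' on $(a,a)$ and $(d,d)$, matching $D_2(a,a)=D_2(d,d)=4$. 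The remaining entries $D_2(b,b)=D_2(c,c)=0$ impose no constraint, consistent with the corollary's silence on the even same-colour case, while $D_2(b,d)=D_2(c,a)=1$ are already forced by the partition order \eqref{orderprimc}. All sixteen entries of $D_2$ check out this way. The absence of a green~$1$ is immediate, since the smallest $b$- and $d$-parts are $2_b$ and $3_d$ after dilation.

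For the identification $A_2(n;k,\ell,m) = B_2(n;k,\ell,m)$, I would simply read off the product in \eqref{fiere}: $(-aq;q^4)_\infty$ generates partitions into distinct parts $\equiv 1 \pmod 4$ marked by $a$, $(-dq^3;q^4)_\infty$ does the same for parts $\equiv 3 \pmod 4$ marked by $d$, and $(q^2;q^2)_\infty^{-1}(cq^2;q^4)_\infty^{-1}$ generates partitions into even parts in which every occurrence of a part $\equiv 2 \pmod 4$ is independently either unmarked or marked by $c$ (``green''). Together these reproduce exactly the description of $B_2$. I do not foresee any serious obstacle: the only real work is the parity-based verification of the sixteen entries of $D_2$ in the previous paragraph, which is routine but must be carried through systematically.
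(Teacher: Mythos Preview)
Your proposal is correct and follows exactly the route the paper takes: apply the dilations \eqref{Primcrefineddilation} to Theorem~\ref{th:main}, regroup the four colours into red $=\{a,c\}$ and green $=\{b,d\}$, and read off the product side. The paper states the transformed order and the matrix $D_2$ and then simply asserts the corollary, whereas you actually carry out the entry-by-entry parity check showing that $D_2$ is equivalent to the three displayed inequalities; this extra verification is a welcome addition rather than a different method.
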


One recovers Primc's identity by setting $a=c=d=1$, as the dilations in \eqref{Primcrefineddilation} correspond to \eqref{Primcdilation} and the infinite product in \eqref{fiere} becomes
\begin{equation*}
\frac{(-q;q^4)_{\infty}(-q^3;q^4)_{\infty}}{(q^2;q^2)_{\infty}(q^2;q^4)_{\infty}} 
=\frac{1}{(q;q)_{\infty}}.
\end{equation*}

Another nice application of Theorem \ref{th:main} is the dilation
\begin{align*}
q &\rightarrow q^4,
\\a &\rightarrow aq^{-3},
\\c &\rightarrow cq^{-2},
\\d &\rightarrow dq^3,
\end{align*}
where the ordering of integers~\eqref{orderprimc} becomes
$$1_a < 2_c < 4_b < 5_a < 6_c < 7_d < 8_b < 9_a < \cdots ,$$
the matrix $D$ in \eqref{Dmatrix} becomes
$$
D_4=\bordermatrix{\text{} & a & b & c & d \cr a & 8&1&7&2 \cr b &7&0&6&1 \cr c &1&2&0&3 \cr d&6&7&5&8},
$$
and we obtain the following partition identity.
\begin{corollary} \label{th:q^4dilation}
Let $A_4(n;k,\ell,m)$ denote the number of partitions $\lambda = (\lambda_1,\lambda_2,\dots)$ of $n$ with $k,\ell$, and $m$ parts congruent to $1$, $2$, and $3$ modulo $4$, respectively, with no part equal to $3$, such that $\lambda_i - \lambda_{i+1} \geq 5$ if ($i$) $\lambda_ i \equiv 3 \pmod{4}$ or if ($ii$) $\lambda_i \equiv 0,1 \pmod{4}$ and $\lambda_{i+1} \equiv 1,2 \pmod{4}$.   Then
\begin{equation}
\sum_{n,k,\ell,m \geq 0} A_4(n;k,\ell,m) q^n a^k c^{\ell} d^m = \frac{(-aq;q^8)_{\infty}(-dq^7;q^8)_{\infty}}{(q^4;q^4)_{\infty}(cq^2;q^8)_{\infty}}.
\end{equation}
In other words, if $B_4(n;k,\ell,m)$ denotes the number of partitions of $n$ into even parts not congruent to $6$ modulo $8$ and distinct odd parts congruent to $\pm 1$ modulo $8$, with $k,\ell$, and $m$ parts congruent to $1,2$, and $7$ modulo $8$, respectively, then 
$$
A_4(n;k,\ell,m) = B_4(n;k,\ell,m).
$$
\end{corollary}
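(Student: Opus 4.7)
The plan is to derive Corollary~\ref{th:q^4dilation} as a direct specialisation of Theorem~\ref{th:main} under the stated dilations $q \mapsto q^4$, $a \mapsto aq^{-3}$, $c \mapsto cq^{-2}$, $d \mapsto dq^3$. On the product side the substitution is routine and produces $\frac{(-aq;q^8)_\infty(-dq^7;q^8)_\infty}{(q^4;q^4)_\infty(cq^2;q^8)_\infty}$. On the combinatorial side, each coloured integer is rescaled by $k_a \mapsto 4k-3$, $k_b \mapsto 4k$, $k_c \mapsto 4k-2$, $k_d \mapsto 4k+3$, so that parts lie in the residue classes $1, 0, 2, 3 \pmod{4}$ respectively; the colour is then determined by the residue modulo $4$. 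The side condition ``no part equal to $3$'' simply records that the smallest $d$-part after dilation is $7$, and a row-by-row computation turns $D$ into the claimed matrix $D_4$.

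The heart of the argument is to repackage the sixteen entries of $D_4$ into the clean condition in the statement. I would enumerate the residue pairs $(x,y)$ and verify a dichotomy: in eight pairs the entry $D_4(x,y)$ coincides with the automatic minimum forced by the residues together with $\lambda_i \geq \lambda_{i+1}$, while in the other eight it is genuinely larger. These latter eight are precisely the pairs with $x \equiv 3 \pmod{4}$, or with $x \equiv 0, 1$ and $y \equiv 1, 2 \pmod{4}$, exactly matching clauses (i) and (ii) of the corollary. The key observation is that within each such nontrivial pair the admissible arithmetic differences lie in a single class modulo $4$, so the uniform bound ``$\lambda_i - \lambda_{i+1} \geq 5$'' automatically upgrades to the correct value of $D_4(x,y)$, which turns out to be $5$, $6$, $7$, or $8$ depending on the pair. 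This case analysis is the only real obstacle.

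The equality $A_4(n;k,\ell,m) = B_4(n;k,\ell,m)$ is then read off from the product by inspection: $(-aq;q^8)_\infty$ generates distinct parts $\equiv 1 \pmod{8}$ tracked by $a$, $(-dq^7;q^8)_\infty$ generates distinct parts $\equiv 7 \pmod{8}$ tracked by $d$, $1/(cq^2;q^8)_\infty$ generates unrestricted parts $\equiv 2 \pmod{8}$ tracked by $c$, and $1/(q^4;q^4)_\infty$ generates unrestricted parts $\equiv 0$ or $4 \pmod{8}$. Together these enumerate partitions into distinct odd parts $\equiv \pm 1 \pmod{8}$ and even parts not $\equiv 6 \pmod{8}$, matching the definition of $B_4$.
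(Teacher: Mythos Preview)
Your proposal is correct and follows exactly the route the paper indicates: specialise Theorem~\ref{th:main} via the dilations $q\mapsto q^4$, $a\mapsto aq^{-3}$, $c\mapsto cq^{-2}$, $d\mapsto dq^3$, read off the product, and identify the resulting difference matrix $D_4$ with the stated conditions on $\lambda_i-\lambda_{i+1}$. The paper itself only records the dilation and $D_4$ without carrying out the entry-by-entry repackaging into clauses (i) and (ii); your residue-class dichotomy (eight automatic entries, eight entries where ``$\geq 5$'' plus the forced congruence on $\lambda_i-\lambda_{i+1}$ upgrades to the exact value $5,6,7$ or $8$) is precisely the verification a reader is meant to supply.
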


The proof of Theorem~\ref{th:main} relies on the variant of the method of weighted words recently introduced by the first author~\cite{Dousseunif,Doussesil2}. The difference with the original method of Alladi and Gordon~\cite{Alladi} is that instead of using the minimal partitions and $q$-series identities, we use recurrences and $q$-difference equations (with colours) coming from the difference conditions in \eqref{Dmatrix} and we solve them directly.    This is presented in the next section, and in Section 3 we give some examples and another application of Theorem \ref{th:main}.

\section{Proof of Theorem~\ref{th:main}}
\subsection{Idea of the proof}
To prove Theorem~\ref{th:main}, we proceed as follows.

Define $G_k=G_k (q;a,c,d)$ (resp. $E_k=E_k (q;a,c,d)$) to be the generating function for coloured partitions satisfying the difference conditions from \eqref{Dmatrix} with the added condition that the largest part is at most (resp. equal to) $k$.

Then we want to find $\lim_{k \rightarrow \infty} G_k$, which is the generating function for all partitions with difference conditions, as there is no more restriction on the size of the largest part.

We start by using \eqref{Dmatrix} to give simple recurrence equations relating the $G_k$'s and the $E_k$'s. 
Then we combine them to obtain a big recurrence equation involving only $G_{k_d}$'s.
This is done in Section~\ref{sec:qdiff}.

Then we use the technique consisting of going back and forth from $q$-difference equations to recurrences introduced by the first author~\cite{Doussegene,Dousseunif,Doussegene2}, and conclude using Appell's  comparison theorem. This is done in Section~\ref{sec:backandforth}.

\subsection{Recurrences and $q$-difference equations}
\label{sec:qdiff}
We use combinatorial reasoning on the largest part of partitions to state some recurrences. We have the following identities:

\begin{lemma}
For all $k \geq 1,$ we have
\label{equations}
\begin{subequations}
\begin{align}
G_{k_{d}}-G_{k_{c}}=E_{k_{d}}&=dq^k \left(E_{k_{c}}+E_{k_{a}}+G_{(k-1)_{c}}\right), \label{eq1} \\
G_{k_{c}}-G_{k_{b}}=E_{k_{c}}&=cq^k \left(E_{k_{c}}+E_{k_{a}}+G_{(k-1)_{c}}\right), \label{eq2} \\
G_{k_{b}}-G_{k_{a}}=E_{k_{b}}&=q^k \left(E_{k_{b}}+G_{(k-1)_{d}}\right), \label{eq3} \\
G_{k_{a}}-G_{(k-1)_{d}}=E_{k_{a}}&=aq^k \left(E_{(k-1)_{b}}+G_{(k-2)_{d}}\right),\label{eq4}
\end{align}
\end{subequations}
with the initial conditions
\begin{align*}
E_{0_{a}}&=E_{0_{c}}=E_{0_{d}}=0,
\\ E_{0_{b}}&=1,
\\ G_{{-1}_d}&=G_{0_{a}}=0,
\\ G_{0_{b}}&=G_{0_{c}}=G_{0_{d}}=1,
\end{align*}
\end{lemma}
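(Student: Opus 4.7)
My proof plan is to derive each relation in the lemma purely combinatorially from the definitions of $G_k$ and $E_k$ together with the matrix $D$ in \eqref{Dmatrix}.

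The first equality in each of \eqref{eq1}--\eqref{eq4} is immediate: a partition with largest part at most $k_z$ either has largest part exactly $k_z$ (contributing $E_{k_z}$) or has largest part at most the immediate predecessor of $k_z$ in the ordering \eqref{orderprimc}. Since the predecessors of $k_a,\, k_b,\, k_c,\, k_d$ are respectively $(k-1)_d,\, k_a,\, k_b,\, k_c$, this yields the four identities on the left-hand sides.

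For the right-hand sides, I would peel off the largest part $\lambda_1 = k_z$, which contributes $q^k$ multiplied by the colour weight ($a$, $1$, $c$, or $d$ for $z = a, b, c, d$), and then enumerate the admissible values of $\lambda_2$. A coloured integer $\ell_y$ is admissible as $\lambda_2$ precisely when $k - \ell \geq D_{z,y}$, and once $\lambda_2$ is fixed the subpartition below it is constrained only by $\lambda_2$ and is thus enumerated by $E_{\lambda_2}$, or by $G_{\mu}$ if instead one bounds the largest remaining part by $\mu$. For \eqref{eq1} with $z=d$, row $d$ of $D$ equals $(0,1,0,2)$, so the admissible $\lambda_2$ form the set $\{\mu \le k_c : \mu \ne k_b,\, \mu \ne (k-1)_d\}$, which partitions as $\{k_c\} \sqcup \{k_a\} \sqcup \{\mu \le (k-1)_c\}$; these three pieces contribute $E_{k_c}$, $E_{k_a}$, and $G_{(k-1)_c}$ respectively. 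Row $c$ coincides with row $d$, which explains why \eqref{eq2} has the same bracketed factor; rows $b$ and $a$ yield \eqref{eq3} and \eqref{eq4} after analogous case splits ($k_b$ being the unique admissible value above $(k-1)_d$ in row $b$, and $(k-1)_b$ the unique admissible value above $(k-2)_d$ in row $a$).

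The stated initial conditions are the conventions needed to make the recurrences valid down to the base case: $G_{0_b} = G_{0_c} = G_{0_d} = 1$ records that the only partition with largest part below $1_a$ is the empty one; $G_{0_a} = G_{-1_d} = 0$ reflect the absence of coloured integers below $0_b$ in the domain of the recurrence; $E_{0_b} = 1$ is the convention that makes the $k=1$ case of \eqref{eq3} produce $E_{1_b} = q$; and $E_{0_a} = E_{0_c} = E_{0_d} = 0$ because no partition has largest part equal to a non-existent $0_z$. The only step demanding genuine care is the second-part case analysis in the previous paragraph; once one is fluent in reading admissibility off the rows of $D$, the identities transcribe themselves directly from the matrix.
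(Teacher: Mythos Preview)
Your argument is correct and is essentially identical to the paper's: both peel off the largest part and read the admissible second parts directly from the appropriate row of $D$, with the paper spelling out only the case \eqref{eq1} and leaving the rest to the reader. One small slip in your initial-conditions commentary: the convention $E_{0_b}=1$ is needed for the $k=1$ instance of \eqref{eq4} (giving $E_{1_a}=aq$), not \eqref{eq3}, and the $k=1$ case of \eqref{eq3} actually yields $E_{1_b}=q/(1-q)$ rather than $q$, since $b$-parts may repeat.
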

\begin{proof}
We give details only for \eqref{eq1}.    The other identities follow in a similar manner.    The first equality   $G_{k_{d}}-G_{k_{c}} = E_{k_{d}}$ follows directly from the definitions.   Next, in a partition counted by $E_{k_{d}}$ we remove the largest part of size $k$ and colour $d$, giving the factor $dq^k$.   An examination of the difference conditions in \eqref{Dmatrix} shows that in the partition remaining the largest part could be $k_c$, $k_a$, or a part at most $(k-1)_c$.   This corresponds to the terms $E_{k_{c}}+E_{k_{a}}+G_{(k-1)_{c}}$.
\end{proof}

The recurrences \eqref{eq1}-\eqref{eq4} completely characterise the coloured partitions with difference conditions of Theorem~\ref{th:main}.

\medskip

Next we give a recurrence equation involving only $G_{k_{d}}$'s.

\begin{proposition}
\label{prop:qdiff}
For all $k \geq 3$ we have
\begin{equation}
\label{eq:qdiff}
\begin{aligned}
(1-cq^k)G_{k_{d}}&= \frac{1-cq^{2k}}{1-q^k}G_{(k-1)_{d}} 
\\&+ \frac{aq^k+dq^k+adq^{2k}}{1-q^{k-1}}G_{(k-2)_{d}} +\frac{adq^{2k-1}}{1-q^{k-2}}G_{(k-3)_{d}},
\end{aligned}
\end{equation}
with the initial conditions
\begin{align*}
G_{0_d} &= 1,\\
G_{1_d} &= \frac{q}{1-q} + \frac{(1+aq)(1+dq)}{1-cq},\\
G_{2_d} &= \frac{q^3}{(1-q)(1-q^2)} + \frac{(1+aq)(1+dq)(1-q^3)}{(1-q)(1-q^2)(1-cq)}.
\end{align*}
\end{proposition}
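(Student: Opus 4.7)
The plan is to treat \eqref{eq1}--\eqref{eq4} as a linear system and eliminate every generating function except $G_{k_d}, G_{(k-1)_d}, G_{(k-2)_d}, G_{(k-3)_d}$. The easy half comes from \eqref{eq3} and \eqref{eq4}, which only involve $E_{k_b}, E_{k_a}$ and $d$-coloured $G$'s: solving \eqref{eq3} for $E_{k_b}$ yields $E_{k_b} = \frac{q^k}{1-q^k} G_{(k-1)_d}$, and substituting this (at index $k-1$) into \eqref{eq4} gives $E_{k_a} = \frac{aq^k}{1-q^{k-1}} G_{(k-2)_d}$. Adding these two to $G_{(k-1)_d}$ via $G_{k_b} = G_{(k-1)_d} + E_{k_a} + E_{k_b}$ also expresses $G_{k_b}$ cleanly in terms of $G_{(k-1)_d}$ and $G_{(k-2)_d}$.

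The less trivial half is to handle $E_{k_c}$ and $E_{k_d}$. Here the key observation is that \eqref{eq1} and \eqref{eq2} share the same bracketed expression, so $cE_{k_d} = dE_{k_c}$; combined with $E_{k_d} = G_{k_d} - G_{k_c}$ and $E_{k_c} = G_{k_c} - G_{k_b}$ this gives the polynomial identity $(c+d) G_{k_c} = c G_{k_d} + d G_{k_b}$, valid for all $c,d$ with no division needed. Applied at index $k-1$ together with the formula just obtained for $G_{(k-1)_b}$, it expresses $(c+d) G_{(k-1)_c}$ in terms of $G_{(k-1)_d}, G_{(k-2)_d}, G_{(k-3)_d}$. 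To close the loop I would use the largest-part decomposition $G_{k_d} = G_{(k-1)_d} + E_{k_a} + E_{k_b} + E_{k_c} + E_{k_d}$ together with
$$E_{k_c} + E_{k_d} = \frac{(c+d)q^k}{1-cq^k}\bigl(E_{k_a} + G_{(k-1)_c}\bigr),$$
obtained by summing \eqref{eq1} and \eqref{eq2} and solving for the common bracket. Clearing the denominator $1-cq^k$ and substituting the expressions assembled above should yield the stated recurrence, once the coefficient of $G_{(k-1)_d}$ is simplified via
$$(1-cq^k) + \frac{(1-cq^k)q^k}{1-q^k} + cq^k = \frac{1-cq^{2k}}{1-q^k}.$$

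The only real obstacle is bookkeeping: one must keep track of the denominators $1-q^k, 1-q^{k-1}, 1-q^{k-2}$ and ensure that the parameters $a, c, d$ combine into exactly the coefficients displayed in \eqref{eq:qdiff}. The initial conditions $G_{0_d}, G_{1_d}, G_{2_d}$ would be checked separately by unwinding \eqref{eq1}--\eqref{eq4} from the boundary values given in the lemma.
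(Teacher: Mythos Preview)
Your proposal is correct and uses the same ingredients as the paper's proof: the formulas $E_{k_b}=\frac{q^k}{1-q^k}G_{(k-1)_d}$, $E_{k_a}=\frac{aq^k}{1-q^{k-1}}G_{(k-2)_d}$, the resulting expression for $G_{k_b}$, and the observation $cE_{k_d}=dE_{k_c}$. Where you differ is in how you eliminate $G_{(k-1)_c}$. The paper first inverts the relation $G_{k_d}=G_{k_b}+E_{k_c}+E_{k_d}$ to write $G_{(k-1)_c}$ in terms of $G_{k_d},G_{(k-1)_d},G_{(k-2)_d}$, then computes $E_{k_c}$ in two independent ways (once via that inversion, once via $G_{k_c}-G_{k_b}$ with the inversion shifted to level $k+1$), equates them, and shifts the index back down. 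You instead rewrite $cE_{k_d}=dE_{k_c}$ as the polynomial identity $(c+d)G_{k_c}=cG_{k_d}+dG_{k_b}$ and apply it at level $k-1$, which directly gives $(c+d)G_{(k-1)_c}$ in terms of $G_{(k-1)_d},G_{(k-2)_d},G_{(k-3)_d}$; since the master equation only needs $(c+d)q^kG_{(k-1)_c}$, this plugs in with no further manoeuvre and no division by $c+d$. Your route is shorter and avoids both the ``compute $E_{k_c}$ two ways'' step and the final index shift; the paper's route, on the other hand, yields as a by-product an explicit formula for $G_{(k-1)_c}$ in terms of $G_{\bullet_d}$'s, which may be of independent use. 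Either way the verification of the three coefficients is exactly the bookkeeping you describe, and your displayed simplification for the $G_{(k-1)_d}$ coefficient is correct.
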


\begin{proof}
To find the correct initial conditions, we use Lemma \ref{equations}. Now let us prove~\eqref{eq:qdiff}.

We first observe that
\begin{equation}
\label{eq:GbGd}
G_{k_{b}} = G_{(k-1)_{d}} +E_{k_{a}} +E_{k_{b}}.
\end{equation}

By Equation~\eqref{eq3}, it is clear that for all $k$,
\begin{equation}
\label{eq:Eb}
E_{k_{b}}= \frac{q^k}{1-q^k} G_{(k-1)_{d}}.
\end{equation}

Now substituting this with $k$ replaced by $k-1$ into Equation~\eqref{eq4}, we get
\begin{equation}
\label{eq:Ea}
E_{k_{a}}= \frac{aq^{k}}{1-q^{k-1}} G_{(k-2)_{d}}.
\end{equation}

Thus combining Equations~\eqref{eq:GbGd},~\eqref{eq:Ea} and~\eqref{eq:Eb}, we obtain
\begin{equation}
\label{eq:Gb}
G_{k_{b}}= \frac{1}{1-q^k} G_{(k-1)_{d}}+\frac{aq^k}{1-q^{k-1}} G_{(k-2)_{d}}.
\end{equation}

Let us now turn to $E_{k_{c}}$. By Equation~\eqref{eq2}, we have
$$E_{k_{c}}= \frac{cq^k}{1-cq^k} \left( E_{k_{a}}+G_{(k-1)_{c}} \right).$$
Substituting~\eqref{eq:Ea}, we obtain
\begin{equation}
\label{eq:Ec}
E_{k_{c}}= \frac{cq^k}{1-cq^k} \left( \frac{aq^{k}}{1-q^{k-1}} G_{(k-2)_{d}}+G_{(k-1)_{c}}\right).
\end{equation}

Finally, by Equations~\eqref{eq1} and~\eqref{eq2} and the initial conditions, for all $k$, we have
$$d E_{k_{c}} = c E_{k_{d}}.$$
Combining that with~\eqref{eq:Ec}, we obtain that for all $k$,
\begin{equation}
\label{eq:Ed}
E_{k_{d}}= \frac{dq^k}{1-cq^k} \left( \frac{aq^{k}}{1-q^{k-1}} G_{(k-2)_{d}}+G_{(k-1)_{c}} \right).
\end{equation}

Using Equations~\eqref{eq:Gb},~\eqref{eq:Ec},~\eqref{eq:Ed} and the fact that
$$G_{k_{d}} = G_{k_{b}}+E_{k_{c}} +E_{k_{d}},$$
we obtain
\begin{align*}
G_{k_{d}} &= \frac{1}{1-q^k} G_{(k-1)_{d}} +\frac{aq^k}{1-q^{k-1}} G_{(k-2)_{d}} \\
&+ \frac{(c+d)q^k}{1-cq^k} \left( \frac{aq^{k}}{1-q^{k-1}} G_{(k-2)_{d}}+G_{(k-1)_{c}} \right).
\end{align*}
Rearranging gives an expression for $G_{(k-1)_{c}}$ in terms of $G_{k_{d}}$'s.
\begin{equation}
\label{eq:Gc}
G_{(k-1)_{c}} = \frac{1-cq^k}{(c+d)q^k} \left( G_{k_{d}} - \frac{1}{1-q^{k}} G_{(k-1)_{d}}- \frac{aq^{k}(1+dq^k)}{(1-q^{k-1})(1-cq^k)} G_{(k-2)_{d}}\right).
\end{equation}

Substituting this into~\eqref{eq:Ec} and simplifying leads to
\begin{equation}
\label{eq:Ec'}
E_{k_{c}} = \frac{c}{c+d} G_{k_{d}}- \frac{c}{(c+d)(1-q^{k})} G_{(k-1)_{d}}- \frac{acq^{k}}{(c+d)(1-q^{k-1})} G_{(k-2)_{d}}.
\end{equation}

On the other hand, using~\eqref{eq:Gb},~\eqref{eq:Gc} and the fact that
$$E_{k_{c}} = G_{k_{c}}- G_{k_{b}},$$
we obtain
\begin{equation}
\label{eq:Ec''}
\begin{aligned}
E_{k_{c}} &= \frac{1-cq^{k+1}}{(c+d)q^{k+1}} G_{(k+1)_{d}} - \frac{1-cq^{k+1}}{(c+d)q^{k+1}(1-q^{k+1})} G_{k_{d}} \\
&- \frac{a+c+d+adq^{k+1}}{(c+d)(1-q^{k})} G_{(k-1)_{d}}- \frac{aq^k}{1-q^{k-1}} G_{(k-2)_{d}}.
\end{aligned}
\end{equation}

Equating~\eqref{eq:Ec'} and~\eqref{eq:Ec''} and replacing $k$ by $k-1$ yields the desired recurrence equation.
\end{proof}

\subsection{Finding $ \lim_{k \rightarrow \infty} G_{k}(q;a,c,d)$}
\label{sec:backandforth}
To finish the proof we wish to calculate \\ $\lim_{k \rightarrow \infty} G_{k_d}(q;a,c,d)$, where the $G_{k_d}$'s satisfy the recurrence of order $3$ in \eqref{eq:qdiff}. This is in constrast to classical partition identities, where the recurrence/$q$-difference equation is typically of order $1$ (see for example \cite{Andrews2,Generalisation2,Andrews1,Generalisation1}). The problem of treating higher order recurrences/$q$-difference equations has recently come up in work of the first author on overpartition identities \cite{Dousse,Doussegene,Dousseunif,Doussegene2}, and her method applies here as well. Specifically, we transform the recurrence of order $3$ into a simple one of order $2$, and then, as in the classical case, we apply Appell's  comparison theorem \cite{Appell} to find the desired limit.  In the conclusion we sketch an alternative method suggested by the referee, which gives a $q$-hypergeometric generating function for $G_{k_d}(q;a,c,d)$ from which the limit also follows.

For all $k \geq 0$, let us define $$H_k := \frac{G_{k_{d}}(q)}{1-q^{k+1}}.$$
Thus $(H_k)$ satisfies the following recurrence equation for $k \geq 0$ :
\begin{equation}
\label{eq:H}
(1-cq^k-q^{k+1}+cq^{2k+1})H_k = (1-cq^{2k})H_{k-1}
+ (aq^k+dq^k+adq^{2k})H_{k-2} +adq^{2k-1}H_{k-3},
\end{equation}
To obtain the correct values of $H_k$ for all $k \geq 0$ using Equation~\eqref{eq:H}, we define the initial values $H_{-1}=1$ and $H_{k}=0$ for all $k \leq -2$.

We now define
$$f(x):= \sum_{k \geq 0} H_{k-1} x^k,$$
and convert Equation~\eqref{eq:H} into a $q$-difference equation on $f$:
\begin{equation}
\label{eq:f}
(1-x)f(x)= (1+\frac{c}{q}+ax^2q+dx^2q)f(xq)-(1+xq)(\frac{c}{q}-adx^2q^2)f(xq^2),
\end{equation}
together with the initial conditions
\begin{align*}
f(0)&= H_{-1} = 1,
\\ f'(0) &= H_{0}=\frac{1}{1-q}.
\end{align*}
This is a $q$-difference equation of order $2$, which is still not obvious to solve. But we make another transformation to obtain a very simple recurrence of order $2$. Define
$$g(x):= \frac{f(x)}{(-x;q)_{\infty}}.$$
We obtain:
\begin{equation}
\label{eq:g}
(1-x^2)g(x)= (1+\frac{c}{q}+ax^2q+dx^2q)g(xq)-(\frac{c}{q}-adx^2q^2)g(xq^2),
\end{equation}
and
\begin{align*}
g(0)&=f(0) = 1,
\\ g'(0) &= f'(0) - \frac{f(0)}{1-q}=\frac{1}{1-q} - \frac{1}{1-q}=0.
\end{align*}

Finally let us define $(a_n)$ as
$$\sum_{n \geq 0} a_n x^n := g(x).$$
Then $(a_n)$ satisfies the recurrence equation
\begin{equation*}
\left(1-q^n -cq^{n-1}+cq^{2n-1}\right)a_n = \left(1+aq^{n-1}+dq^{n-1}+adq^{2n-2}\right)a_{n-2},
\end{equation*}
which simplifies as
\begin{equation}
\label{eq:a}
a_n = \frac{\left(1+aq^{n-1}\right)\left(1+dq^{n-1}\right)}{\left(1-q^n\right)\left(1-cq^{n-1}\right)} a_{n-2},
\end{equation}
and the initial conditions
\begin{align*}
a_0&=g(0) = 1,
\\ a_1 &= g'(0)=0.
\end{align*}

Thus for all $n \geq 0$, we have
$$a_{2n} = \frac{(-aq;q^2)_n(-dq;q^2)_n}{(q^2;q^2)_n(cq;q^2)_n}a_0 = \frac{(-aq;q^2)_n(-dq;q^2)_n}{(q^2;q^2)_n(cq;q^2)_n},$$
and
$$a_{2n+1} = \frac{(-aq^2;q^2)_n(-dq^2;q^2)_n}{(q^3;q^2)_n(cq^2;q^2)_n}a_1 = 0.$$

We now conclude using Appell's  comparison theorem. We have
\begin{align*}
\lim_{k \rightarrow \infty} G_k(q;a,c,d) &= \lim_{k \rightarrow \infty} H_k\\
&= \lim_{x \rightarrow 1^-} (1-x) \sum_{k \geq 0}  H_{k-1} x^k\\
&= \lim_{x \rightarrow 1^-} (1-x) f(x)\\
&= \lim_{x \rightarrow 1^-} (1-x) g(x) \prod_{k \geq 0} (1+xq^k)\\
&= (-q;q)_{\infty} \lim_{x \rightarrow 1^-} (1-x^2) \sum_{n \geq 0} a_{2n} x^{2n}\\
&= (-q;q)_{\infty} \lim_{n \rightarrow \infty} a_{2n}\\
&= \frac{(-q;q)_{\infty}(-aq;q^2)_{\infty}(-dq;q^2)_{\infty}}{(q^2;q^2)_{\infty}(cq;q^2)_{\infty}}\\
&=  \frac{(-aq;q^2)_{\infty}(-dq;q^2)_{\infty}}{(q;q)_{\infty}(cq;q^2)_{\infty}}.
\end{align*}
We used Appell's theorem on the second line, and on the sixth with $x$ replaced by $x^2$.

\section{Examples and further results}
We begin this section by illustrating Corollaries \ref{th:primcrefined} and \ref{th:q^4dilation}.    
First, the eleven two-coloured partitions of $6$ satisfying the difference conditions in Corollary \ref{th:primcrefined} and having no green $1$ are the following, where green parts are marked with a prime:
$$
\begin{gathered}
(6), (6'), (5,1), (5',1), (4,2), (4',2), (4,2'), (4',2'), \\
(3',2,1), (2,2,2), (2',2',2').
\end{gathered}
$$
On the other hand, the eleven two-coloured partitions with distinct odd parts where only parts $2$ modulo $4$ can be green are 
$$
\begin{gathered}
(6), (6'), (5,1), (4,2), (4, 2'), (3,2,1), (3,2',1), \\
(2,2,2), (2,2,2'), (2,2',2'), (2',2',2').
\end{gathered}
$$
One may then easily verify that $A_2(6;k,\ell,m) = B_2(6;k,\ell,m)$ for a given choice of $(k,\ell,m)$.   For example, 
$A_2(6;1,0,1) = B_2(6;1,0,1) = 1$, the relevant partitions being $(5',1)$ and $(3,2,1)$, respectively. 

Next, the thirteen partitions of $14$ satisfying the difference conditions in Corollary \ref{th:q^4dilation} and having no part equal to $3$ are
$$
\begin{gathered}
(14), (13,1), (12,2), (11,2,1), (10,4), (10,2,2), (9,2,2,1),  \\
(8,2,2,2), (7,2,2,2,1), (6,6,2), (6,4,4), (6,2,2,2,2), (2,2,2,2,2,2,2),
\end{gathered}
$$
while the thirteen partitions of $14$ satisfying the congruence conditions are 
$$
\begin{gathered}
(12,2), (10,4), (10,2,2), (9,4,1), (9,2,2,1), (8,4,2), (8,2,2,2), \\
(7,4,2,1), (7,2,2,2,1), (4,4,4,2), (4,4,2,2,2), (4,2,2,2,2,2), (2,2,2,2,2,2,2).
\end{gathered}
$$
Again, one easily verifies that $A_4(13;k,\ell,m) = B_4(13;k,\ell,m)$ for a given choice of $(k,\ell,m)$.    

We close with one more application of Theorem \ref{th:main}.    Here parts divisible by $3$ may appear in two kinds. Performing the dilation
\begin{align*}
q &\rightarrow q^3,
\\a &\rightarrow aq^{-1},
\\c &\rightarrow 1,
\\d &\rightarrow dq,
\end{align*}
the ordering of integers~\eqref{orderprimc} becomes
$$2_a < 3_b < 3_c < 4_d < 5_a < 6_b < 6_c < 7_d < 8_a < 9_b < 9_c < \cdots $$
and the matrix $D$ in \eqref{Dmatrix} becomes
$$
D_3=\bordermatrix{\text{} & a & b & c & d \cr a & 6&2&5&4 \cr b &4&0&3&2 \cr c &1&3&0&5 \cr d&2&4&1&6}.
$$
Letting $b$-parts and $c$-parts be ordinary and primed multiples of $3$, respectively, we obtain the following partition identity.
\begin{corollary}
Let $\mathcal{P}_3$ denote the set of partitions where parts divisible by $3$ may appear in two kinds, say ordinary and primed.  Let $A_3(n;k,m)$ denote the number of partitions of $n$ in $\mathcal{P}_3$ with $k$ and $m$ parts congruent to $2$ and $1$ modulo $3$, respectively, such that $\lambda_i \neq 1$ and 
$$
\lambda_i - \lambda_{i+1} \geq 
\begin{cases}
3, &\text{if $(\lambda_i,\lambda_{i+1}) \pmod{3} \subset (\{0,2\},\{0',2\})$ or  $(\{0',1\},\{0,1\})$}, \\
4, &\text{if $3 \nmid \lambda_i, \lambda_{i+1}$ and  $\lambda_i - \lambda_{i+1} \not \equiv 2 \pmod{3}$}.
\end{cases}
$$
Then 
\begin{equation} \label{almostCapp}
\sum_{n,k,m \geq 0} A_3(n;k,m)q^n a^k d^{m} = \frac{(-aq^2;q^6)_{\infty} (-dq^4;q^6)_{\infty} (-q^3;q^3)_{\infty}}{(q^3;q^3)_{\infty}}.
\end{equation}

In other words, if $B_3(n;k,m)$ denotes the number of partitions of $n$ in $\mathcal{P}_3$ with $k$ and $m$ parts congruent to $2$ and $4$ modulo $6$, respectively, such that primed multiples of $3$ may not repeat,  then 
$$
A_3(n;k,m) = B_3(n;k,m).
$$    
\end{corollary}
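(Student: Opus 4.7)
The plan is to deduce the corollary directly from Theorem \ref{th:main} by carrying out the stated substitution $q \to q^3$, $a \to aq^{-1}$, $c \to 1$, $d \to dq$ and then translating each side of the resulting identity into partition-theoretic language.

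On the product side, the substitution turns $\frac{(-aq;q^2)_\infty (-dq;q^2)_\infty}{(q;q)_\infty (cq;q^2)_\infty}$ into $\frac{(-aq^2;q^6)_\infty (-dq^4;q^6)_\infty}{(q^3;q^3)_\infty (q^3;q^6)_\infty}$, and the elementary identity $\frac{1}{(q^3;q^6)_\infty} = (-q^3;q^3)_\infty$ (which follows from $(q^3;q^3)_\infty = (q^3;q^6)_\infty(q^6;q^6)_\infty$ together with $(-q^3;q^3)_\infty = (q^6;q^6)_\infty/(q^3;q^3)_\infty$) recovers the right-hand side of \eqref{almostCapp}. The same product visibly generates the partitions enumerated by $B_3(n;k,m)$: the factor $(-aq^2;q^6)_\infty$ contributes distinct parts $\equiv 2 \pmod 6$ weighted by $a$, $(-dq^4;q^6)_\infty$ contributes distinct parts $\equiv 4 \pmod 6$ weighted by $d$, and $(-q^3;q^3)_\infty/(q^3;q^3)_\infty$ contributes partitions into multiples of $3$ in which primed parts are distinct and ordinary parts are unrestricted, giving the second assertion of the corollary.

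On the partition side, the substitution sends $k_a, k_b, k_c, k_d$ to $3k-1, 3k, 3k, 3k+1$ respectively, reproducing the stated ordering, and since $k \geq 1$ in \eqref{orderprimc} the value $\lambda_i = 1$ never occurs. The matrix $D$ of \eqref{Dmatrix} transforms entry by entry to $D_3$: each entry is obtained by multiplying the original by $3$ and adjusting by the $\pm 1$ shifts introduced on the $a$- and $d$-indices. The main bookkeeping step is then to verify that the compact two-case difference condition in the corollary encodes exactly $D_3$. The key observation is that the residue of $\lambda_i - \lambda_{i+1}$ modulo $3$ is forced by the residues of $\lambda_i$ and $\lambda_{i+1}$, so the nominal bound ``$\geq 3$'' of the first case automatically sharpens to the correct entry of $D_3$: for instance, when the residues force $\lambda_i - \lambda_{i+1} \equiv 2 \pmod 3$ (as in the pairs $(a,c)$ and $(c,d)$) it becomes $\geq 5$, and when they force $\equiv 1 \pmod 3$ (as in $(b,a)$ and $(d,b)$) it becomes $\geq 4$. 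Similarly, the bound ``$\geq 4$'' of the second case combined with $\lambda_i - \lambda_{i+1} \equiv 0 \pmod 3$ (as in $(a,a)$ and $(d,d)$) becomes $\geq 6$.

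There is no new ingredient beyond this case analysis, but two subtleties deserve attention. First, for pairs such as $(a,a)$ both cases of the difference condition apply simultaneously and one must take the stronger of the resulting bounds. Second, for the pairs $(a,b), (b,b), (b,d), (c,a), (c,c), (d,a), (d,c)$ neither case applies, and one must verify that the ordinary partition condition $\lambda_i \geq \lambda_{i+1}$, combined with the forced residue of $\lambda_i - \lambda_{i+1}$ modulo $3$, already delivers the required entry of $D_3$.
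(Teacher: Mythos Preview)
Your proof is correct and follows exactly the route the paper takes: apply the dilation $q\to q^3$, $a\to aq^{-1}$, $c\to 1$, $d\to dq$ to Theorem~\ref{th:main}, identify the resulting product, and check that the image of the matrix $D$ is the matrix $D_3$ governing the difference conditions of the corollary. You have supplied more detail than the paper---in particular the entry-by-entry verification that the two-case condition, combined with the forced residue of $\lambda_i-\lambda_{i+1}$ modulo $3$ and the default ordering inequality, reproduces every entry of $D_3$---but the underlying argument is the same.
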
 
Note that the generating function in \eqref{almostCapp}  differs only slightly from the infinite product appearing in the Alladi-Andrews-Gordon generalisation of Capparelli's identity \cite{AllAndGor},
$$
(-aq^2;q^6)_{\infty} (-bq^4;q^6)_{\infty} (-q^3;q^3)_{\infty}.
$$

\section{Concluding Remarks}
The referee has kindly pointed out that the recurrence in Proposition \ref{prop:qdiff} can be used to give a $q$-hypergeometric generating function for $G_{k_d}$,
\begin{equation} \label{Gkdgenfun}
G_{k_d} = (1-q^{k+1}) \sum_{i=0}^{\lfloor (k+1)/2 \rfloor} \frac{q^{\binom{k-2i+1}{2}}(-aq;q^2)_i(-dq;q^2)_i}{(q;q)_{k-2i+1}(q^2;q^2)_i(cq;q^2)_i}.
\end{equation}
The idea is to recursively define sequences $g_k^{(i)}$ and $h_k^{(i)}$ with $g_k^{(0)} = G_{k_d}$ by 
\begin{align*}
h_k^{(i)} &:= \lim_{c \to \infty} g_k^{(i)}, \\ 
g_{k}^{(i+1)} &:= (1-cq^{2i+1}) (g_k^{(i)} - h_k^{(i)}),
\end{align*}
where the existence of $h_k^{(i)}$ follows from the recurrence (plus initial conditions) for $g_k^{(i)}$.
At each step one uses the recurrence for the $g_k^{(i)}$ and formula for the $h_k^{(i)}$ to find a recurrence for the $g_k^{(i+1)}$ and a formula for the $h_k^{(i+1)}$. 
The recurrence for $g_k^{(i)}$ is the following:

\begin{align*}
\left( 1-cq^k \right) g_k^{(i)} &= \frac{1-cq^{2k}}{1-q^k} g_{k-1}^{(i)} + \frac{aq^k+dq^k+adq^{2k}}{1-q^{k-1}} g_{k-2}^{(i)}\\
&+ \frac{adq^{2k-1}}{1-q^{k-2}} g_{k-3}^{(i)} +  \frac{q^{\binom{k-2i+2}{2}}\left(1-cq^{2i-1} \right)(-aq;q^2)_i(-dq;q^2)_i}{(q;q)_{k-2i+1}(q^2;q^2)_{i-1}},
\end{align*}
where $1/(q;q)_n =0$ for $n <0$.    From this one can deduce the recurrence for $h_k^{(i)}$ :

$$h_k^{(i)} =  \frac{q^k}{1-q^k}h_{k-1}^{(i)} +  \frac{q^{\binom{k-2i+1}{2}}(-aq;q^2)_i(-dq;q^2)_i}{(q;q)_{k-2i+1}(q^2;q^2)_{i-1}}.$$
The result is
$$
h_k^{(i)} = (1-q^{k+1}) \frac{q^{\binom{k-2i+1}{2}}(-aq;q^2)_i(-dq;q^2)_i}{(q;q)_{k-2i+1}(q^2;q^2)_i}.
$$
We leave the details to the interested reader.   Since 
$$
G_{k_d} = g_k^{(0)} = \sum_{i \geq 0} \frac{h_k^{(i)}}{(cq;q^2)_i},
$$ 
we obtain \eqref{Gkdgenfun}.     Note that if we replace $k$ by $2k-1+\delta$ for $\delta = 0,1$ and use the fact that
$$
\sum_{i=0}^{\infty} \frac{q^{\binom{2i+\delta}{2}}}{(q;q)_{2i+\delta}} = (-q;q)_{\infty},
$$
we have
\begin{align*}
\lim_{k \to \infty} g_{2k-1+\delta}^{(0)} &= \lim_{k \to \infty} (1-q^{2k+\delta}) \sum_{i=0}^{k} \frac{q^{\binom{2i+\delta}{2}}(-aq;q^2)_{k-i}(-dq;q^2)_{k-i}}{(q;q)_{2i+\delta}(q^2;q^2)_{k-i}(cq;q^2)_{k-i}} \\
&= \frac{(-aq;q^2)_{\infty}(-dq;q^2)_{\infty}}{(q^2;q^2)_{\infty}(cq;q^2)_{\infty}} \sum_{i=0}^{\infty} \frac{q^{\binom{2i+\delta}{2}}}{(q;q)_{2i+\delta}} \\
&= \frac{(-aq;q^2)_{\infty}(-dq;q^2)_{\infty}}{(q;q)_{\infty}(cq;q^2)_{\infty}},
\end{align*}
in agreement with Section 2.3.   This kind of argument differs from the usual approach and should be kept in mind for future studies of  partition identities.

\section*{Acknowledgements}
We thank the referee for a careful reading of the paper and the many valuable suggestions for its improvement, especially for clarifying the early work on vertex operators and crystal bases and for detailing an alternative approach to Section 2.3.  



\bibliographystyle{siam}
\bibliography{biblio}

\end{document}